\newtheorem{theorem}{Theorem}
\newtheorem{lemma}[theorem]{Lemma}
\newtheorem{corollary}[theorem]{Corollary}
\theoremstyle{remark}
\newcommand\Oh{\mathcal{O}}
\DeclareMathOperator\pw{pw}
\DeclarePairedDelimiter\set{\{}{\}}
\let\leq\leqslant
\let\geq\geqslant
\let\le\leqslant
\let\ge\geqslant
\let\subset\subseteq
\let\setminus-
\let\Gamma\varGamma
\let\Lambda\varLambda
\let\Phi\varPhi
\let\Psi\varPsi
\let\rho\varrho
\let\epsilon\varepsilon
\let\old@setaddresses\@setaddresses
\def\@setaddresses{\bigskip\bgroup\parindent 0pt\let\scshape\relax\old@setaddresses\egroup}
\begin{document}

\title{Tight bound for the Erd\H{o}s-Pósa property of tree minors}

\author[Dujmović]{Vida Dujmović}
\email{vida.dujmovic@uottawa.ca}
\author[Joret]{Gwenaël Joret}
\email{gwenael.joret@ulb.be}
\author[Micek]{Piotr Micek}
\email{piotr.micek@uj.edu.pl}
\author[Morin]{Pat Morin}
\email{morin@scs.carleton.ca}

\address[V.~Dujmovi{\'c}]{School of Computer Science and Electrical Engineering, University of Ottawa, Ottawa, Canada}
\address[G.~Joret]{Computer Science Department, Université libre de Bruxelles, Brussels, Belgium}
\address[P.~Micek]
{Theoretical Computer Science Department, Faculty of Mathematics and Computer Science,
Jagiellonian University, Kraków, Poland}
\address[P.~Morin]{School of Computer Science, Carleton University, Ottawa, Canada}
\thanks{G.~Joret is supported by a PDR grant from the Belgian National Fund for Scientific Research (FNRS). 
V.~Dujmović is supported by NSERC and a University of Ottawa Research Chair.
P.~Micek is supported by the National Science Center of Poland under grant
UMO-2023/05/Y/ST6/00079 within the WEAVE-UNISONO program. P.~Morin is supported by NSERC}

\begin{abstract}
Let $T$ be a tree on $t$ vertices. 
We prove that for every positive integer $k$ and every graph $G$, 
either $G$ contains $k$ pairwise vertex-disjoint subgraphs each having a $T$ minor, or 
there exists a set $X$ of at most $t(k-1)$ vertices of $G$ such that  
$G-X$ has no $T$ minor. 
The bound on the size of $X$ is best possible and improves on an earlier $f(t)k$ bound proved by Fiorini, Joret, and Wood (2013) with some fast growing function $f(t)$. 
Moreover, our proof is short and simple. 
\end{abstract}

\maketitle

\section{Introduction}

In 1965, Erd\H{o}s and P\'osa~\cite{EP1965} showed that every graph $G$ either contains $k$ vertex-disjoint cycles or contains a set $X$ of $\Oh(k \log k)$ vertices such that $G-X$ has no cycles. 
The $\Oh(k \log k)$ bound on the size of $X$ is best possible up to a constant factor. 
Using their Grid Minor Theorem, Robertson and Seymour~\cite{RS1986} proved the following generalization: For every planar graph $H$, there exists a function $f_H(k)$ such that every graph $G$ contains either $k$ vertex-disjoint subgraphs each having an $H$ minor, or a set $X$ of at most $f_H(k)$ vertices such that $G-X$ has no $H$ minor. 
For $H=K_3$, this corresponds to the setting of the Erd\H{o}s-P\'osa theorem.  

The theorem of Robertson and Seymour is best possible in the sense that no such result holds when $H$ is not planar. 
The original upper bound of $f_H(k)$ on the size of $X$ depends on bounds from the Grid Minor Theorem and is large as a result (though it is polynomial in $k$ if we use the polynomial version of the Grid Minor Theorem, see~\cite{CC2016}). 
Chekuri and Chuzhoy~\cite{CC2013} subsequently showed an improved upper bound of $\Oh_H(k \log^c k)$ for a fixed planar graph $H$, where $c$ is some large but absolute constant. 
This was in turn improved to $\Oh_H(k \log k)$ by Cames van Batenburg, Huynh, Joret, and Raymond~\cite{CvBHJR2019}, thus matching the original bound of Erd\H{o}s and P\'osa for cycles. 

An $\Oh_H(k \log k)$ bound is best possible when $H$ contains a cycle. 
However, when $H$ is a forest, it turns out that one can obtain a linear in $k$ bound on the size of $X$, as proved by Fiorini, Joret, and Wood~\cite{FJW2013}. 
Their proof gives an $\Oh_H(k)$ bound with a non-explicit constant factor that grows very fast as a function of $|V(H)|$. 
This is due to the use of MSO-based tools in the proof, among others.
In this short note, we give a simple proof of their result with an optimal dependence on $t$ and $k$ when $H$ is a tree.

%tree version
\begin{theorem}
\label{thm:main-tree}
Let $T$ be a tree on $t$ vertices. 
For every positive integer $k$ and every graph $G$, 
either $G$ contains $k$ pairwise vertex-disjoint subgraphs each having a $T$ minor, or 
there exists a set $X$ of at most $t(k-1)$ vertices of $G$ such that  
$G-X$ has no $T$ minor. 
\end{theorem}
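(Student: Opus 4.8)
The plan is to induct on $k$. The base case $k=1$ is immediate: if $G$ has no $T$ minor take $X=\emptyset$, and otherwise any single subgraph of $G$ having a $T$ minor is the required collection. For the inductive step, write $\nu(G)$ for the maximum number of pairwise vertex-disjoint subgraphs of $G$, each having a $T$ minor. Everything reduces to the following claim: \emph{if $G$ has a $T$ minor, then there is a set $S$ of at most $t$ vertices with $\nu(G-S)\le\nu(G)-1$.} Granting it, the inductive step is short. If $\nu(G)\ge k$ we are done; otherwise $G$ has a $T$ minor (else $X=\emptyset$ works), so fix such an $S$. Since $\nu(G-S)\le\nu(G)-1\le k-2$, the induction hypothesis applied to $G-S$ with parameter $k-1$ cannot give $k-1$ disjoint subgraphs, hence it gives a set $X'$ with $|X'|\le t(k-2)$ such that $(G-S)-X'$ has no $T$ minor; then $X:=S\cup X'$ has $|X|\le t(k-1)$ and $G-X$ has no $T$ minor.

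To prove the claim it suffices to find $S$ with $|S|\le t$ meeting $\bigcup_{N\in\mathcal N}V(N)$ for every maximum-size packing $\mathcal N$ of subgraphs having a $T$ minor: for then no $\nu(G)$ pairwise disjoint such subgraphs can avoid $S$, i.e.\ $\nu(G-S)<\nu(G)$. Fix a subgraph $M$ of $G$ that has a $T$ minor with $|V(M)|$ minimum. The clean case is $T\subseteq G$, equivalently $|V(M)|=t$: take $S:=V(M)$. For a maximum packing $\mathcal N$, not all its members can be vertex-disjoint from $M$, as otherwise $\mathcal N\cup\{M\}$ would be a strictly larger packing; so some $N\in\mathcal N$ meets $V(M)=S$, which is what we need.

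In general $|V(M)|>t$, and we must compress the witness to $t$ vertices — this is where being a tree enters. Since $M$ has a $T$ minor and $T$ is a tree, $M$ contains a subtree $H$ of $G$ with a $T$ minor (take a $T$-model inside $M$, a spanning tree of each branch set, and one edge of $G$ realizing each edge of $T$); minimality of $|V(M)|$ gives $V(H)=V(M)$ and, moreover, that no proper subtree of $H$ has a $T$ minor. The $T$-model on $H$ has branch sets $B_v$ that are subtrees of $H$; root $T$ at a leaf $r$ and let $S$ collect, for each $v\in V(T)$, the endpoint in $B_v$ of the edge of $H$ joining $B_v$ to the branch set of the parent of $v$ (and the unique vertex of $B_r$ for the root), so $|S|=t$. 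Given a maximum packing $\mathcal N$ disjoint from $S$, some $N\in\mathcal N$ meets $V(H)$ (again because $M$ cannot be added to $\mathcal N$). One then wants to show that $G[V(H)\setminus V(N)]$ still has a $T$ minor: since $N$ meets $V(H)$, this is a subgraph of $G$ on fewer than $|V(H)|=|V(M)|$ vertices with a $T$ minor, contradicting minimality of $M$. All representatives $s_v$ survive, because $S$ is disjoint from $N$; the difficulty — and the main obstacle to the whole proof — is that $N$ might disconnect some branch set $B_v$ and sever $s_v$ from a port it must reach, so the argument as stated is not yet complete. I expect closing this gap to require either a sharper choice of the representatives $s_v\in B_v$, or a side induction on $|V(G)|$ that contracts an edge lying inside an oversized branch set to reduce to the already-settled case $T\subseteq G$.
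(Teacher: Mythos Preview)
Your outer induction on $k$ is correct and matches the paper's structure: everything reduces to finding, whenever $G$ has a $T$ minor, a set $S$ of at most $t$ vertices with $\nu(G-S)<\nu(G)$. The gap you flag in constructing $S$ is real, and your proof is incomplete as you yourself acknowledge.

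Neither proposed fix is straightforward. For the contraction route, suppose you contract an edge $e=uv$ inside an oversized branch set and obtain by induction a hitting set $X'$ in $G/e$ with $(G/e)-X'$ having no $T$ minor. If the contracted vertex $w$ lies in $X'$, the natural lift $X=(X'\setminus\{w\})\cup\{u,v\}$ has $|X|=|X'|+1$, overshooting by one; if $w\notin X'$, then $X:=X'$ gives $(G-X)/e=(G/e)-X'$ with no $T$ minor, but this does \emph{not} imply $G-X$ has no $T$ minor, since contraction can destroy tree minors (already $T=P_3$, $G-X=P_3$, contract an edge). As for sharper representatives: when the centre branch set of a $K_{1,4}$-model is a long path, any $t$-element subset of $V(H)$ leaves some internal spine vertex uncovered, and an intruding $N$ through that vertex can sever the model; there is no evident local rule for choosing the $s_v$ that avoids this.

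The paper obtains $S$ by an entirely different mechanism, via pathwidth. A lemma implicit in Diestel's short proof (that every graph of pathwidth $\ge t-1$ contains every $t$-vertex forest as a minor) yields a set $Y\subseteq V(G)$ such that $G[Y]$ has a $T$ minor and admits a path decomposition $(B_1,\dots,B_q)$ of width at most $t-1$ with $\partial_G Y\subseteq B_q$. Taking the least $\ell$ for which $G_\ell:=G[B_1\cup\dots\cup B_\ell]$ contains a $T$ minor, one has $|B_\ell|\le t$; the bag $B_\ell$ separates $G_\ell-B_\ell$ from $G-V(G_\ell)$, and $G_\ell-B_\ell$ has no $T$ minor by minimality of $\ell$. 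Hence every $T$ minor in $G-B_\ell$ lies entirely in $G-V(G_\ell)$, and since $G_\ell$ itself contributes a $T$ minor disjoint from that part, $\nu(G-B_\ell)=\nu(G-V(G_\ell))<\nu(G)$. This $S:=B_\ell$ is exactly your missing ingredient; the point is that the ``small witness with a $t$-vertex bottleneck'' comes from a path decomposition, not from massaging a single minimum $T$-model.
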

\noindent Observe that the bound on the size of $X$ in~\Cref{thm:main-tree} is tight: If $G$ is a complete graph on $tk-1$ vertices, then $G$ does not contain $k$ pairwise vertex-disjoint subgraphs each having a $T$ minor, and every set $X$ of vertices such that $G-X$ has no $T$ minor has size at least $|V(G)| - (t-1) = t(k-1)$.

\Cref{thm:main-tree} follows immediately from the following more general result for forests.

% forest version
\begin{theorem}
\label{thm:main-forest}
Let $F$ be a forest on $t$ vertices and let $t'$ be the maximum number of vertices in a component of $F$. 
For every positive integer $k$ and every graph $G$, 
either $G$ contains $k$ pairwise vertex-disjoint subgraphs each having an $F$ minor, or 
there exists a set $X$ of at most $tk-t'$ vertices of $G$ such that  
$G-X$ has no $F$ minor. 
\end{theorem}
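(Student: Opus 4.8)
The plan is to induct on $k$, and for the inductive step, on $|V(G)|$. The case $k=1$ is the classical (and easy) fact that if $G$ has an $F$ minor then deleting some set of at most $t - t'$ vertices destroys it: indeed, if $G$ has an $F$ minor, fix a minimal such minor model; since $F$ is a forest, one component model uses a subgraph that is a forest on exactly $t'$ branch vertices' worth of... more carefully, a minimal $F$-minor model in $G$ has at most $t$ vertices total (each of the $t$ branch sets is a single vertex and the connecting paths can be taken internally empty because $F$ is a forest, so no subdivision vertices are needed — a forest minor is the same as a topological forest minor with all branch sets singletons once we take a model minimal under taking subgraphs), and deleting all but the $t'$ vertices of one largest component of this model leaves a graph with no $F$ minor... wait, that deletes $t-t'$ vertices but then $G-X$ still could have another $F$ minor elsewhere. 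So instead: if $G$ has no $k$ disjoint $F$-minor subgraphs, we want a hitting set. First I would reduce to connected $G$ (handle components separately, combining the bounds, using that each component either contributes a disjoint copy or a hitting set and a short accounting argument closes it).

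So assume $G$ connected and $G$ has an $F$ minor (else $X = \emptyset$ works). Pick a minimal connected subgraph $H$ of $G$ that still has an $F$ minor; then $|V(H)| \le t$ (minimality plus $F$ being a forest). Apply induction with $k-1$ to $G - V(H)$: either $G - V(H)$ has $k-1$ disjoint $F$-minor subgraphs, in which case together with $H$ we get $k$ disjoint ones in $G$, contradiction; or there is a set $X'$ with $|X'| \le t(k-1) - t'$ hitting all $F$ minors in $G - V(H)$. Then $X = X' \cup V(H)$ has size at most $t(k-1) - t' + t = tk - t'$, and $G - X = (G - V(H)) - X'$ has no $F$ minor. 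That is the whole argument.

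The one genuine point to get right — and the main obstacle — is the claim $|V(H)| \le t$ for a subgraph-minimal connected $H$ with an $F$-minor, together with making the component-splitting bookkeeping match the $tk - t'$ bound exactly (this is presumably why the statement is phrased with the refinement $t'$ rather than just $t$). For the size claim: take an $F$-minor model in $H$ with branch sets $(B_v)_{v \in V(F)}$; each $H[B_v]$ is connected, and we may contract each $B_v$; by minimality of $H$ under taking subgraphs, $H$ is exactly the union of spanning trees of the $B_v$'s plus, for each edge $uv \in E(F)$, a single edge of $H$ between $B_u$ and $B_v$ — so $H$ is a tree-like graph and in fact $H$ itself has a spanning tree $T_H$ with $|V(T_H)| = |V(H)|$ on which the model still lives; minimality forces each $|B_v| = 1$, giving $|V(H)| \le |V(F)| = t$. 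For the bookkeeping across components $C_1, \dots, C_m$ of $G$: if some $C_i$ already has $k$ disjoint $F$-minor subgraphs we are done; otherwise each $C_i$ admits a hitting set $X_i$ of size at most $t k_i - t'_i$ where $k_i - 1$ is the max number of disjoint $F$-minor subgraphs in $C_i$ (with $k_i \ge 1$, and $t'_i = t'$ if $C_i$ has an $F$ minor, else $X_i = \emptyset$, $k_i = 1$); since $\sum (k_i - 1) \le k - 1$, we get $\sum k_i \le k - 1 + m$, but only components with an $F$ minor contribute, and one checks $\sum |X_i| \le t k - t'$ by a short induction on $m$ reducing to the connected case above. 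The connected case is the heart; everything else is arithmetic.
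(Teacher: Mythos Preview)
Your central claim---that a subgraph-minimal connected $H\subseteq G$ containing an $F$ minor has $|V(H)|\le t$---is false, and this is the real obstacle. Take $F=K_{1,4}$ (so $t=5$) and $G=K_{3,3}$. Contracting any edge of $K_{3,3}$ produces a vertex of degree~$4$, so $K_{3,3}$ has a $K_{1,4}$ minor; but $K_{3,3}$ is cubic, so the centre branch set of any $K_{1,4}$ model must contain at least two vertices, giving $|V(H)|\ge 6$. Indeed no proper subgraph works: deleting a vertex leaves $K_{2,3}$, which has only five vertices and maximum degree~$3$, hence no $K_{1,4}$ minor. Your justification ``minimality forces each $|B_v|=1$'' fails because a leaf of the spanning tree of a branch set may be precisely where the model edge to a neighbouring branch set attaches, so it cannot be removed. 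In general, for a tree $T$ with a vertex of degree at least~$4$, having $T$ as a minor is strictly weaker than having $T$ as a subgraph, and minimal $T$-models can be arbitrarily larger than $|V(T)|$ in graphs of bounded degree. (Separately, your component-by-component bookkeeping in the disconnected case also does not close: if $m'\ge 2$ components of $G$ each contain an $F$ minor, your estimate gives $\sum|X_i|\le t(k-1)+m'(t-t')$, which exceeds $tk-t'$ once $m'\ge 2$ and $t>t'$.)

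The paper avoids this trap by never looking for a small subgraph that contains a full $F$ minor. Instead it strengthens the statement to track, for each component $T_i$ of $F$, how many disjoint $T_i$ models are still needed. Using Diestel's lemma it builds a path decomposition of width at most $t_m-1$ (where $t_m$ is the size of the \emph{smallest} component still outstanding) of an initial piece $G[Y]$, and finds the first prefix $G_\ell=G[B_1\cup\cdots\cup B_\ell]$ containing a minor of \emph{some} single component $T_{i'}$. Only the separating bag $B_\ell$, of size at most $t_m$, is added to the hitting set; the subgraph $G_\ell$ itself may be large. The bound $\sum_i x_i t_i - t_{\max(I)}$ is engineered so that paying $t_m$ for the separator while earning one $T_{i'}$ model keeps the arithmetic balanced.
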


% \begin{longpat}
% I prefer the following statement of \Cref{thm:main-forest}:

% \begin{theorem}
%     Let $F$ be a forest on $t$ vertices and having $c$ connected components. 
%     For every positive integer $k$ and every graph $G$, 
%     either $G$ contains $k$ pairwise vertex-disjoint subgraphs each having an $F$ minor, or 
%     there exists a set $X$ of at most $t(k-1/c)$ vertices of $G$ such that  
%     $G-X$ has no $F$ minor.     
% \end{theorem}

% I know that the current statement is stronger when the largest tree is bigger than $t/c$, but someone who cares even a little bit can recover it from the first paragraph in the proof.
% \end{longpat}

Let us also point out the following corollary of~\Cref{thm:main-tree} (proved in the next section).  

\begin{corollary}
\label{cor:pathwidth}
For all positive integers $p$ and $k$, and for every graph $G$, either $G$ contains $k$ vertex-disjoint subgraphs each of pathwidth at least $p$, or $G$ contains a set $X$ of at most $2\cdot 3^{p+1}k$ vertices such that $G-X$ has pathwidth strictly less than $p$. 
\end{corollary}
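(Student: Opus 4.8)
The plan is to reduce the pathwidth statement to \Cref{thm:main-tree} applied to a suitable tree $T$ that acts as an obstruction to small pathwidth. The classical fact I want to invoke is that there is a tree $T_p$ on a bounded number of vertices such that every graph with pathwidth at least $p$ contains $T_p$ as a minor, and conversely every graph excluding $T_p$ as a minor has pathwidth less than $p$; a standard choice is the complete ternary tree of height $p+1$, which has fewer than $2\cdot 3^{p+1}$ vertices (more precisely $\tfrac{3^{p+2}-1}{2}$, which we bound crudely by $2\cdot 3^{p+1}$ for $p\geq 1$). The relevant direction is the excluded-minor characterization of bounded pathwidth: a graph has pathwidth at least $p$ if and only if it has the complete ternary tree of height $p+1$ as a minor. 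I would cite this (it follows from the work of Robertson–Seymour / Bienstock–Robertson–Seymour–Thomas on excluded-forest minors and pathwidth, see also Takahashi–Ueno–Kajitani); any reference giving a tree of size $2^{\Oh(p)}$ with this property suffices for the stated bound.

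First I would fix $T = T_p$ to be this ternary tree of height $p+1$, so $t := |V(T_p)| \leq 2\cdot 3^{p+1}$. Given a graph $G$ and an integer $k$, apply \Cref{thm:main-tree} to $T$, $G$, and $k$. In the first outcome, $G$ contains $k$ pairwise vertex-disjoint subgraphs $G_1,\dots,G_k$ each having a $T$ minor; by the characterization, each $G_i$ has pathwidth at least $p$, so we are in the first case of the corollary. In the second outcome, there is a set $X$ of at most $t(k-1) \leq 2\cdot 3^{p+1}(k-1) \leq 2\cdot 3^{p+1}k$ vertices such that $G-X$ has no $T$ minor; by the characterization again, $G-X$ has pathwidth strictly less than $p$, which is the second case. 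This establishes the corollary.

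The only real content is the excluded-minor characterization of pathwidth by a single bounded-size tree, so the main (entirely expository) obstacle is stating it cleanly and getting the size estimate to land at $2\cdot 3^{p+1}$: one should note that pathwidth is minor-monotone, that the ternary tree of height $h$ has pathwidth exactly $h$ (so height $p+1$ is the threshold), and that its vertex count $\sum_{i=0}^{p+1} 3^i = \tfrac{3^{p+2}-1}{2} \leq 2\cdot 3^{p+1}$ for every positive integer $p$. With those facts in hand the proof is a two-line application of \Cref{thm:main-tree}; no genuinely new argument is needed.
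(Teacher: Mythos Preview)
Your argument has a genuine gap: the ``if and only if'' characterization you invoke is false. It is \emph{not} true that a graph has pathwidth at least $p$ if and only if it contains the complete ternary tree of height $p+1$ as a minor. One direction holds (containing that tree forces large pathwidth, by minor-monotonicity), but the converse fails badly: $K_{p+1}$ has pathwidth $p$, yet every minor of $K_{p+1}$ has at most $p+1$ vertices, so it cannot contain any tree on $\tfrac{3^{p+2}-1}{2}$ vertices as a minor. More structurally, no single tree $T$ can characterize ``pathwidth $\geq p$'' this way: if every graph of pathwidth $\geq p$ contained $T$ as a minor, then in particular $K_{p+1}$ would, forcing $|V(T)|\le p+1$; but a tree on $p+1$ vertices has pathwidth $O(\log p)<p$, so containing it as a minor cannot certify pathwidth $\geq p$. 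What Bienstock--Robertson--Seymour--Thomas (and Diestel) actually give is that excluding a tree on $t$ vertices implies pathwidth $<t-1$; with your $t\approx 3^{p+1}$ this only yields $\pw(G-X)<3^{p+1}$, not $\pw(G-X)<p$.

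This is precisely why the paper's proof needs a second stage. It first applies \Cref{thm:main-tree} with the ternary tree $T_p$ of height $p$ to obtain a set $X_1$ of size at most $3^{p+1}(k-1)$ such that $G-X_1$ has no $T_p$ minor, hence (by \Cref{lem:Diestels-thing}) a path decomposition of width $<3^{p+1}$. Then it applies a Helly-type lemma for connected subgraphs against a path decomposition (\Cref{lemma:helly_property_tree_decomposition}) to the family of connected subgraphs of $G-X_1$ with pathwidth $\geq p$: either there are $k$ pairwise disjoint such subgraphs, or some union $X_2$ of at most $k-1$ bags (so $|X_2|\le 3^{p+1}(k-1)$) meets them all, giving $\pw(G-X_1-X_2)<p$. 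The set $X=X_1\cup X_2$ then has size at most $2\cdot 3^{p+1}k$. Your one-shot application of \Cref{thm:main-tree} cannot replace this second step.
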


\section{Proof}
For a positive integer $k$, we use the notation $[k]\coloneqq \{1,\ldots,k\}$, and when $k=0$ let $[k] \coloneqq \varnothing$. 

Let $G$ be a graph.
We denote by $V(G)$ and $E(G)$, the vertex set and edge set of $G$, respectively. 
Let $X\subset V(G)$.  Then $G[X]$ denotes the subgraph of $G$ induced by the vertices in $X$ and $G-X=G[V(G)\setminus X]$. 
We define the \emph{boundary} of $X$ in $G$ to be $\partial_G X := \set{v \in X \mid vw\in E(G),\, w\in V(G-X)}$.  We omit the subscript $G$ when the graph $G$ is clear from the context. 

A \emph{path decomposition} of $G$ is a sequence $(B_1, B_2, \dots, B_q)$ of vertex subsets of $G$ called {\em bags} satisfying the following properties: (1) every vertex of $G$ appears in a nonempty set of consecutive bags, and (2) for every edge $uv$ of $G$, there is a bag containing both $u$ and $v$. 
The {\em width} of the path decomposition is the maximum size of a bag minus $1$. 
The {\em pathwidth $\pw(G)$} of $G$ is the minimum width of a path decomposition of $G$.

A graph $H$ is a {\em minor} of a graph $G$ if $H$ can be obtained from a subgraph of $G$ by contracting edges. 
Robertson and Seymour~\cite{RS83} proved that there exists a function $f:\mathbb{N}\to\mathbb{N}$ such that
for every graph $G$ and every forest $F$ on $t$ vertices, 
if $\pw(G)\geq f(t)$ then $G$ contains $F$ as a minor. 
Bienstock, Robertson, Seymour, and Thomas~\cite{BRST1991} later showed that one can take $f(t)=t-1$, which is best possible. 
Diestel~\cite{D95} subsequently gave a short proof of this result. 
Our proof of~\Cref{thm:main-forest} builds on the following slightly stronger result, which appears implicitly in Diestel's proof~\cite{D95}. 

\begin{lemma}[\cite{D95}]
\label{lem:Diestels-thing}
Let $G$ be a graph, let $t$ be a positive integer, and let $F$ be a forest on $t$ vertices. 
If $\pw(G)\geq t-1$, then there exists $Y\subseteq V(G)$ such that 
\begin{enumerate}
\item $G[Y]$ has a path decomposition $(B_1,\ldots,B_q)$ of width at most $t-1$ such that $\partial Y\subseteq B_q$, and
\item $G[Y]$ contains $F$ as a minor.
\end{enumerate}
\end{lemma}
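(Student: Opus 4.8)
The plan is to follow the strategy of Diestel's short proof~\cite{D95}. First I would reduce to the case where $G$ is connected: since the pathwidth of a graph is the maximum of the pathwidths of its components, some component $G'$ of $G$ has $\pw(G')\ge t-1$, and any $Y\subseteq V(G')$ witnessing the conclusion for $G'$ also witnesses it for $G$, because $G[Y]=G'[Y]$ and $\partial_G Y=\partial_{G'}Y$ whenever $Y\subseteq V(G')$. The case $t=1$ is immediate: since $\pw(G)\ge 0$ the graph $G$ has a vertex $v$, and $Y=\{v\}$ works. So assume $G$ is connected and $t\ge 2$.

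Next I would fix an ordering $v_1,\dots,v_t$ of $V(F)$ in which each $v_i$ has at most one neighbour among $v_1,\dots,v_{i-1}$; such an ordering exists because $F$ is a forest (root every component, list the components one after another, and within a component list each vertex after its parent). The core of the argument is then to build $Y$, a path decomposition of $G[Y]$, and a model of $F$ in $G[Y]$ simultaneously, processing $v_1,\dots,v_t$ one at a time. Concretely I would prove, by induction on $i\in\{0,1,\dots,t\}$, the following strengthened statement: there exist $Y\subseteq V(G)$, a path decomposition $(B_1,\dots,B_q)$ of $G[Y]$ of width at most $t-1$ with $\partial_G Y\subseteq B_q$, and pairwise disjoint connected sets $W_1,\dots,W_i\subseteq Y$ forming a model of $F[\{v_1,\dots,v_i\}]$ in $G[Y]$, such that, calling $W_j$ \emph{active} if $v_j$ has a neighbour in $\{v_{i+1},\dots,v_t\}$, every active $W_j$ meets $\partial_G Y$, and $|B_q|$ is controlled (roughly: the number of active $W_j$ plus $t-i$, hence at most $t$). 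Taking $i=t$ gives the lemma, since then nothing is active and $F[\{v_1,\dots,v_t\}]=F$.

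For the inductive step I would incorporate $v_{i+1}$ into a given configuration. If $v_{i+1}$ starts a new component of $F$, then the components of $v_1,\dots,v_i$ are already complete, so there is no active branch set and $B_q$ is small; otherwise $v_{i+1}$ has a unique earlier neighbour $v_p$, which is active, so $W_p$ contains some $r\in\partial_G Y\subseteq B_q$. In either case I would pick a vertex $u\notin Y$ — one adjacent to $r$ in the second case (this exists precisely because $r\in\partial_G Y$) — put $W_{i+1}:=\{u\}$, and append the bag $B_q\cup\{u\}$ to the decomposition; this is a legal path decomposition of $G[Y\cup\{u\}]$ exactly because every neighbour of $u$ lying in $Y$ belongs to $\partial_G Y$ and hence to $B_q$. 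I would then trim the new last bag, deleting representatives of branch sets that have just become inactive and internal, to restore the size bound, and check that the remaining clauses survive.

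The step just sketched is where Diestel's argument is delicate, and I expect this to be the main obstacle. Two points must be handled with care. First, one must design the strengthened hypothesis and the choice of $u$ so that adding $u$ cannot separate a still-active branch set from $\partial_G Y$ and so that trimming really does restore the bound on $|B_q|$ — and the naive bookkeeping does not quite close (adding $u$ may itself land on the boundary, and the bag can grow by one each time an active vertex is absorbed), so the precise form of the invariant, and the two special features of the ordering (at most one earlier neighbour) and of the invariant (active branch sets touch the boundary), are exactly what one has to get right here. Second, one must ensure a usable $u\notin Y$ always exists: if none did, then $\partial_G Y=\varnothing$, so $Y=V(G)$ since $G$ is connected, and then $G[Y]=G$ would have a path decomposition of width at most $t-1$, forcing $\pw(G)=t-1$; this extremal case — in which the conclusion for $G$ is essentially the Bienstock--Robertson--Seymour--Thomas theorem itself — requires organising the construction so that $Y$ is never enlarged past what is needed and so that fresh branch vertices can be found among unused boundary vertices. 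Turning these considerations into a clean, short argument is the crux, and is precisely what Diestel's proof accomplishes.
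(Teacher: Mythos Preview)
The paper does not prove this lemma; it quotes it from Diestel~\cite{D95}, noting only that the statement ``appears implicitly in Diestel's proof.'' There is therefore nothing in the paper to compare your proposal against, and your plan is indeed a reconstruction of Diestel's strategy.

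Beyond the two difficulties you already flag, the step you sketch is missing a mechanism that Diestel's argument needs. Your only way of enlarging $Y$ is to create a fresh singleton branch set $W_{i+1}:=\{u\}$; nothing in the sketch ever grows an existing $W_j$. If all branch sets stay singletons you are locating $F$ as a \emph{subgraph} of $G[Y]$ rather than as a minor, and that is too strong in general---$3$-regular graphs can have arbitrarily large pathwidth yet contain no $K_{1,4}$ subgraph. The same omission is what breaks the invariant ``every active $W_j$ meets $\partial_G Y$'': after adjoining $u$, a still-active $W_p$ may fall off the boundary (say $W_p=\{r\}$ and $u$ was $r$'s unique neighbour outside $Y$), and you have provided no repair step. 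What Diestel does here is allow a second kind of step that enlarges $Y$ and appends a new last bag \emph{without} advancing $i$, absorbing the new vertex into an active branch set so that it again reaches the boundary. Once that mechanism is in place the induction runs on $|Y|$ (equivalently, on the length of the decomposition) rather than on $i$, and the two obstacles you name---the bookkeeping on $|B_q|$ and the extremal case $Y=V(G)$---can be handled together. As written, your proposal correctly identifies the architecture and the pressure points but is not yet a proof.
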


We now turn to the proof of~\Cref{thm:main-forest}. 

\begin{proof}[Proof of~\Cref{thm:main-forest}]
We prove the following strengthening of \Cref{thm:main-forest}:
Let $G$ be a graph, 
let $c$ be a positive integer, 
let $t_1\le \cdots\le t_c$ be nonnegative integers,
let $T_1,\ldots,T_c$ be trees with $|V(T_i)|=t_i$ for every $i\in[c]$, 
let $x_1,\ldots,x_c$ be nonnegative integers, at least one of which is nonzero, and let 
$I:= \{i\in [c]\mid x_i \geq 1\}$. 
Then either 
\begin{enumerate}
\item $G$ contains 
pairwise vertex-disjoint subgraphs $\set{M_{i, j}\mid i\in[c],\, j\in[x_i]}$ such that, for each $i\in [c]$ and $j\in[x_i]$, $M_{i,j}$ contains a $T_i$ minor, or 

\item \label{item:second}
there exists $X\subseteq V(G)$ with $|X|\leq \sum_{i\in I}x_it_i - t_{\max(I)}$ and $G-X$ does not contain $T_i$ as a minor for some $i\in I$. 
\end{enumerate}
We call the tuple $(G,c,T_1,\ldots,T_c,x_1,\ldots,x_c)$ an \emph{instance}. 
\Cref{thm:main-forest} follows by letting $T_1,\ldots,T_c$ be the components of the forest $F$ and letting $x_1=x_2=\cdots=x_c=k$. 

%The proof is by induction on $\sum_{i\in [c]} x_i$. 
Roughly, the proof describes an inductive procedure that attempts to find a pairwise disjoint collection of models, where the number of models of each tree $T_i$ is $x_i$.  
Induction is on the number $\sum_{i\in [c]} x_i$ of models still missing from the collection.  Failing to find one of the missing models at some step will establish \eqref{item:second}.

Let $(G,c,T_1,\ldots,T_c,x_1\ldots,x_c)$ be an instance, and let $m:=\min(I)$. 
Then $T_m$ is a smallest tree among $T_1,\ldots,T_c$ such that $x_m \geq 1$, that is, such that we are still missing a model of $T_m$. 
In the base case, $\sum_{i\in [c]} x_i=x_{m}=1$, and either $G$ has a $T_{m}$ minor and the first outcome of the statement holds, or $G$ has no such minor and the second outcome holds with $X:=\varnothing$, since 
$\sum_{i\in I} x_it_i -t_{\max(I)}= t_m - t_m = 0$.

For the inductive case, assume that $\sum_{i\in [c]} x_i \geq 2$, and that the statement holds for instances with smaller values of the sum. 
If, for every $i\in I$, $G$ has no $T_i$ minor then the second outcome of the statement holds with $X:=\varnothing$ again. 
Thus, we may assume that $G$ has a $T_i$ minor for some $i\in I$.

If $G$ has pathwidth at least $t_{m}-1$, apply~\Cref{lem:Diestels-thing} with $t=t_{m}$ and $F=T_{m}$, and let $Y$ be the resulting subset of vertices of $G$. 
If $G$ has pathwidth less than $t_{m}-1$, simply let $Y:=V(G)$.   In either case, $G[Y]$ has pathwidth at most $t_m-1$ and has a path decomposition $(B_1, B_2, \dots, B_q)$ with $|B_\ell|\leq t_{m}$ for all $\ell\in[q]$, and such that $\partial_G  Y \subseteq B_q$. See \Cref{y_gl_bl}.
% (In case $Y=V(G)$, we have $\partial_G Y=\varnothing$.) 
Furthermore, observe that in both cases $G[Y]$ has a $T_i$ minor for some $i\in I$, by our assumption on $G$. 

\begin{figure}[htpb]
    \begin{center}
        \includegraphics{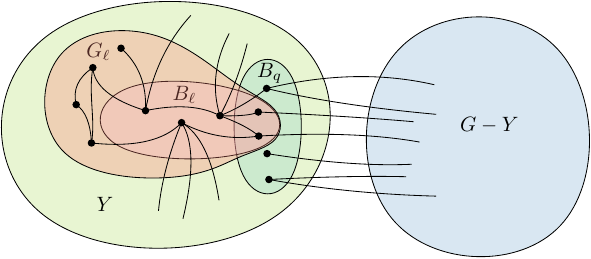}
    \end{center}
    \caption{The set $Y$ and the graph $G_\ell$ whose boundary in $G$ is contained in $B_\ell$.}
    \label{y_gl_bl}
\end{figure}

Let $\ell \in [q]$ be the smallest index such that $G_\ell:=G[B_1 \cup \cdots \cup B_\ell]$ contains a $T_{i}$ minor for some $i\in I$, and let $i'$ be an index in $I$ such that 
\begin{equation}
\label{my_eq-first}
\textrm{$G_\ell$ contains a $T_{i'}$ minor.}
\tag{$\star$} 
\end{equation}

Observe that 
\begin{equation}
\label{my_eq}
\textrm{$G_\ell-B_\ell$ has no $T_i$ minor for every $i\in I$.}
\tag{$\star\star$} 
\end{equation}

We claim that 
\begin{equation}
\label{triple_stars}
\textrm{there is no edge in $G$ between vertices of  $G_\ell-B_\ell$ and vertices of $G-V(G_\ell)$.}
\tag{$\star\star$$\star$} 
\end{equation} 
To see this, suppose for a contradiction that $uv$ is such an edge, with $u\in V(G_\ell)-B_\ell$ and $v\in V(G)-V(G_\ell)$. 
First, note that $u\in B_1 \cup \cdots \cup B_{\ell-1}$. 
If $v\in Y$, then $u$ and $v$ appear together in some bag $B_j$ of the path decomposition $(B_1, B_2, \dots, B_q)$ of $G[Y]$, and $j > \ell$ since $v \notin B_1 \cup \cdots \cup B_\ell$. 
However, since 
$u\in B_1 \cup \cdots \cup B_{\ell-1}$ and $u\in B_j$, we conclude that $u$ belongs also to $B_\ell$, a contradiction. 
If $v\notin Y$, then $u\in \partial Y$, and thus $u\in B_q$. 
Again, we deduce similarly that $u \in B_\ell$, a contradiction. 
This completes the proof of \eqref{triple_stars}. 

Let $G':=G - V(G_\ell)$. 
Let $x'_i := x_i$ for each $i\in [c]-\{i'\}$ and let 
$x'_{i'} := x_{i'}-1$. 
Let $I'=\set{i\in[c]\mid x'_i\geq1}$.
Apply induction to the instance $(G',c,T_1,\ldots,T_c,x'_1,\ldots,x'_c)$. 
If it results in a set of vertex-disjoint subgraphs $\{M'_{i,j}\mid i\in[c],\, j\in [x'_i]\}$, with $M'_{i,j}$ containing a $T_i$ minor for each $i\in [c]$ and $j \in [x'_i]$, then we let $M_{i,j}:=M'_{i,j}$ for each $i\in [c]$ and $j \in [x'_i]$, and $M_{i',x_{i'}} := G_\ell$, 
which using~\eqref{my_eq-first} results in the desired collection of vertex-disjoint subgraphs. 
Otherwise, we obtain a set $X'$ of at most $\sum_{i\in I'}x'_it_i -t_{\max(I')}$ vertices such that $G'-X'$ does not contain $T_a$ as a minor for some $a\in I'$. 

Let $X:=X'\cup B_\ell$. 
Observe that 
\begin{align*}
|X| = |X'|+|B_{\ell}|
&\leq \sum_{i\in I'}x'_it_i - t_{\max(I')}  + t_{m} 
\leq \sum_{i\in I}x_it_i - (t_{\max(I')} + t_{i'}  - t_{m})\\
&\leq \sum_{i\in I}x_it_i - t_{\max(I)}.
\end{align*}
To see why the last inequality holds, there are two cases to consider:
\begin{inparaenum}[(i)]
    \item If $\max(I')=\max(I)$ then the inequality follows immediately since $t_{i'}\ge t_m$.
    \item If $\max(I')<\max(I)$ then $i'=\max(I)$ and $\max(I')\ge \min(I')=m$, so $t_{\max(I')}+t_{i'}-t_m \ge t_{i'}=t_{\max(I)}$.
\end{inparaenum}

Now, let us show that $G-X$ does not contain $T_i$ as a minor, for some $i\in I$.  
Let $a\in I'$ be such that $G'-X'$ does not contain $T_a$ as minor. 
We will show that we can take $i=a$.  
To do so, it is enough to show that $X$ meets every inclusion-wise minimal subgraph of $G$ containing a $T_a$ minor. 
Let $M$ be such a subgraph of $G$. 
Note that $M$ is connected, since $T_a$ is connected.  
Now, observe that by~\eqref{triple_stars}, either $M$ 
is contained in $G'$, or $M$ is contained in $G_\ell-B_\ell$, or $M$ contains a vertex of $B_\ell$.  In the first case, $M$ contains a vertex of $X'\subseteq X$, by the choice of $a$. 
The second case is ruled out by~\eqref{my_eq}. 
In the third case, $M$ contains a vertex of $B_\ell\subseteq X$.  Thus, 
we conclude that $M$ contains a vertex of $X$. 
This concludes the proof. 
\end{proof}

We may now turn to the proof of~\Cref{cor:pathwidth}.  We will use the following lemma, which is a special case of a more general result of Robertson and Seymour [Statement (8.7) in~\cite{RS1986}].

\begin{lemma}\label{lemma:helly_property_tree_decomposition}
    For every graph $G$, for every path decomposition $(B_1, B_2, \dots, B_q)$ of $G$, for every family $\mathcal{F}$ of connected subgraphs of $G$, for every positive integer $d$, either:
    \begin{enumerate}
        \item there are $d$ pairwise vertex-disjoint subgraphs in $\mathcal{F}$, or
        \item there is a set $X$ that is the union of at most $d-1$ bags of $(B_1, B_2, \dots, B_q)$ such that $V(F) \cap X \neq \varnothing$ for every $F \in \mathcal{F}$.
    \end{enumerate}
\end{lemma}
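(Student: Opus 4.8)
The plan is to prove Lemma~\ref{lemma:helly_property_tree_decomposition} by a greedy/induction argument on the number $q$ of bags, choosing a ``leftmost'' subgraph to kill and recursing on the part of $G$ strictly to its right. This is exactly the same leftmost-bag strategy used in the proof of \Cref{thm:main-forest}, adapted to a single family $\cgF$. Concretely, I would first dispose of a trivial case: if some $F \in \cgF$ is empty (has no vertex), the statement needs interpreting, but since we may assume all members of $\cgF$ are nonempty connected subgraphs, every $F$ meets the bag containing any of its vertices. If $\cgF = \varnothing$ then outcome~(2) holds with $X := \varnothing$ (a union of zero bags). Also if $d = 1$ then outcome~(2) holds vacuously with $X := \varnothing$.

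For the main argument, assume $\cgF \ne \varnothing$ and $d \ge 2$. For each $F \in \cgF$, let $\ell(F)$ be the largest index $\ell$ such that $B_\ell \cap V(F) \ne \varnothing$ is the \emph{last} bag meeting $F$ --- actually the cleaner choice is: let $r(F)$ be the largest index such that $V(F) \subseteq B_1 \cup \cdots \cup B_{r(F)}$, i.e.\ the rightmost bag used by $F$; since $F$ is connected and bags of vertices of $F$ form a consecutive interval union that is itself ``connected'' along the path, there is a well-defined such index, and moreover $B_{r(F)} \cap V(F) \ne \varnothing$. Pick $F_0 \in \cgF$ minimizing $r(F_0)$, and set $r_0 := r(F_0)$. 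The key structural claim is: every $F \in \cgF$ that is vertex-disjoint from $B_{r_0}$ satisfies $V(F) \subseteq B_1 \cup \cdots \cup B_{r_0 - 1}$ \emph{or} $V(F) \subseteq B_{r_0+1} \cup \cdots \cup B_q$; and in fact, by minimality of $r_0$ together with connectedness, no $F \in \cgF$ avoiding $B_{r_0}$ can lie entirely in $B_1 \cup \cdots \cup B_{r_0 - 1}$ unless $r(F) < r_0$, contradicting minimality --- so every $F \in \cgF$ either meets $B_{r_0}$ or lies entirely in $G' := G - (B_1 \cup \cdots \cup B_{r_0})$. (Here one uses that if $V(F) \subseteq B_1 \cup \dots \cup B_{r_0-1}$ then $r(F) \le r_0 - 1 < r_0$.)

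Now recurse. Let $\cgF' := \set{F \in \cgF : V(F) \cap (B_1 \cup \cdots \cup B_{r_0}) = \varnothing}$, which by the claim is precisely the set of members of $\cgF$ lying in $G'$, and note $(B_{r_0+1}, \dots, B_q)$ restricted to $G'$ is a path decomposition of $G'$. Apply the lemma inductively to $G'$, this path decomposition, the family $\cgF'$, and the integer $d - 1$. If outcome~(1) holds for $(G', \cgF', d-1)$, we get $d-1$ pairwise disjoint members of $\cgF'$; adding $F_0$ (which is disjoint from all of them since they live in $G'$ and $F_0 \subseteq G_{r_0} := G[B_1 \cup \cdots \cup B_{r_0}]$... wait, $F_0$ may use $B_{r_0}$ but not $G'$, so it is disjoint from everything in $G'$) yields $d$ pairwise disjoint members of $\cgF$, i.e.\ outcome~(1). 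If outcome~(2) holds, we get a set $X'$ that is a union of at most $d-2$ bags (among $B_{r_0+1}, \dots, B_q$) hitting every member of $\cgF'$; then $X := X' \cup B_{r_0}$ is a union of at most $d-1$ bags, and it hits every $F \in \cgF$: those in $\cgF'$ are hit by $X'$, and those not in $\cgF'$ meet $B_{r_0} \subseteq X$ by the claim. This is outcome~(2).

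The main obstacle is getting the ``leftmost'' combinatorics exactly right: pinning down the correct notion of ``rightmost bag used by $F$'' so that (a) it is well-defined for any connected subgraph given a path decomposition, (b) the bag $B_{r(F)}$ genuinely meets $F$, and (c) minimality of $r_0$ forces every surviving member of $\cgF$ to either meet $B_{r_0}$ or lie strictly to its right with a strictly smaller (hence impossible) value of $r$. Once this claim is nailed down, the induction on $q$ (or on $d$, or on $\sum_{F} r(F)$ --- any of these decreases) is routine. A minor point to handle carefully is the base/degenerate cases $d = 1$ and $\cgF = \varnothing$, and the convention that ``a union of at most $d-1$ bags'' allows the empty union.
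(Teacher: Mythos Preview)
The paper does not prove this lemma; it is quoted without proof as a special case of Statement~(8.7) in Robertson and Seymour~\cite{RS1986}, so there is no proof in the paper to compare against. Your proposal is a correct self-contained argument, and it is the natural one: since each connected $F\in\cgF$ occupies an interval $[l(F),r(F)]$ of bag indices, choosing $F_0$ with minimum right endpoint $r_0=r(F_0)$ forces every $F\in\cgF$ either to meet $B_{r_0}$ or to lie entirely to the right of $B_{r_0}$ (and hence to be disjoint from $F_0$), after which one inducts on $d$. Your remark that this mirrors the leftmost-bag strategy in the proof of \Cref{thm:main-forest} is apt. One wording slip to fix: you define $r(F)$ as ``the largest index such that $V(F)\subseteq B_1\cup\cdots\cup B_{r(F)}$'', but that would always be $q$; you mean the \emph{smallest} such index, equivalently $r(F)=\max\set{j:B_j\cap V(F)\neq\varnothing}$, as your own gloss ``rightmost bag used by $F$'' already makes clear. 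With that corrected and the stream-of-consciousness asides removed, the argument is complete.
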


\begin{proof}[Proof of~\Cref{cor:pathwidth}]
It is known (and an easy exercise to show) that, for every positive integer $p$, 
the complete ternary tree $T_p$ of height $p$ has pathwidth $p$. 
First, apply~\Cref{thm:main-tree} on $G$ with the tree $T_p$. 
If $G$ contains $k$ vertex-disjoint subgraphs each containing a $T_p$ minor, we are done. 
So we may assume that the theorem produces a set $X_1$ of at most $|V(T_p)|(k-1) \leq 3^{p+1}(k-1)$ vertices such that $G-X_1$ has no $T_p$ minor. 

By~\Cref{lem:Diestels-thing}, $G-X_1$ has a path decomposition $(B_1, B_2, \dots, B_q)$ of width strictly less than $3^{p+1}$. 
It is easily checked that every inclusion-wise minimal subgraph of $G-X_1$ with pathwidth at least $p$ is connected. Apply~\Cref{lemma:helly_property_tree_decomposition} on $G-X_1$ with the path decomposition $(B_1, B_2, \dots, B_q)$, with $d=k$, and with the family $\mathcal{F}$ of connected subgraphs of $G-X_1$ with pathwidth at least $p$. 
If $\mathcal{F}$ contains $k$ pairwise vertex-disjoint members, we are done. 
So we may assume that the lemma produces a set $X_2$ of at most $3^{p+1}(k-1)$ vertices such that $X_2$ hits every member of $\mathcal{F}$. 
It follows that $G-X_1-X_2$ has pathwidth strictly less than $p$. 
Let $X\coloneqq X_1 \cup X_2$. 
Since $|X| \leq 3^{p+1}(k-1) + 3^{p+1}(k-1) \leq 2\cdot 3^{p+1}k$, the set $X$ has the desired properties. 
\end{proof}

\begin{comment}
\section{Random Thoughts}

% Let $t_1=t_2=p$ and let $G$ be a graph that consists of three disjoint cliques, each of order $2p-1$.  Then each clique in $G$ contains a model of $T_1$ and a model of $T_2$ but any two such models must overlap.  Then $G$ contains a model of $F$ but does not contain two disjoint models of $F$.  

Let $t_1$ be much smaller than $t_2$ and let $G$ consist of three disjoint subgraphs $G_1$, $G_2$ and $G_3$.  
\begin{enumerate}
    \item In $G_1$ you can find one model of $T_1$ or one model of $T_2$, but not a model of $F$.  You can kill the models of $T_1$ with $t_1$ vertices or kill all the models of $T_1$ and $T_2$ with $t_2$ vertices.

    \item In $G_2$ you can pack one model of $T_1$ and nothing more.  (In particular there is not $T_2$ there.) You can kill all models of $T_1$ using $t_1$ vertices.

    \item $G_3$ is just like $G_1$.
\end{enumerate}
So $G$ contains a model of $F$ but does not contain two disjoint models of $F$, so $k=2$.  To kill all the models of $F$ you need a set $X$ that either contains $3t_1$ vertices (to kill all the copies of $T_1$) or contains $2t_2$ vertices (to kill all the copies of $T_2$).  

When $t_1=2t/5$ and $t_2=3t/5$, both these solutions gives $|X|=6t/5 = (k-1)t+t/5$.  Do such $G_i$ and $T_j$'s exist?
\end{comment}

\section*{Acknowledgments}
This work was done during a visit of Gwenaël Joret and Piotr Micek to the University of Ottawa and Carleton University. 
The research stay was partially funded by a grant from the University of Ottawa. 

\bibliographystyle{alpha}
\bibliography{ref}

\end{document}